\begin{document}
\title[Positive solutions of nonlocal fractional boundary value problem]{Positive solutions of nonlocal fractional boundary value problem involving Riemann-Stieltjes integral condition}
\author[F. Haddouchi]{Faouzi Haddouchi}
\address{
Department of Physics, University of Sciences and Technology of
Oran-MB, El Mnaouar, BP 1505, 31000 Oran, Algeria
\newline
And
\newline
Laboratoire de Math\'ematiques Fondamentales et Appliqu\'ees d'Oran (LMFAO). Universit\'e Oran1. B.P. 1524 El Mnaouer, Oran, Alg\'erie.}

\email{fhaddouchi@gmail.com}
\subjclass[2010]{34A08, 34B15, 34B18}
\keywords{Positive solutions, nonlocal boundary conditions, Riemann-Stieltjes integral,
 fixed point index, existence}

\begin{abstract}
In this paper, we investigate the existence of positive solutions for a nonlocal fractional boundary value problem involving Caputo fractional derivative and nonlocal Riemann-Stieltjes integral boundary condition. By using the spectral analysis of the relevant linear operator and Gelfand's formula, we obtain an useful upper and lower bounds for the spectral radius. Our discussion is based on the properties of the Green's function and the fixed point index theory in cones.
\end{abstract}

\maketitle \numberwithin{equation}{section}
\newtheorem{theorem}{Theorem}[section]
\newtheorem{lemma}[theorem]{Lemma}
\newtheorem{definition}[theorem]{Definition}
\newtheorem{proposition}[theorem]{Proposition}
\newtheorem{corollary}[theorem]{Corollary}
\newtheorem{remark}[theorem]{Remark}
\newtheorem{exmp}{Example}[section]

\section{Introduction\label{sec:1}}
Differential equations with fractional derivative related to nonlocal conditions have been discussed as valuable tools in the modeling of many physical phenomena in natural sciences and engineering, such as physics, chemistry, biology, economics, aerodynamics, viscoelasticity, control theory, earthquake, traffic flow, and various material processes \cite{Tarasov,Yin,Paola,Hartley,Caputo,Saka,Caffarelli,Ivancevic,Das1,Mongiovi,Zhang4,Zhang5,Das2,Sabatier,Kilbas2006,
Beleanu,Podlubny,Zhang3}.

Recently, some interesting results about the existence of positive solutions for nonlinear fractional differential equations involving the Riemann-Stieltjes integral boundary condition have been reported \cite{Hao2,Guo,Debowska,Tan,Ren,Jia,Srivastava,Ahmed,Padhi,Jankowski1,Jankowski2,Wang,Wang2,Kaihong,Kossowski,Jiang}.

 In \cite{Hao}, the authors established the existence of positive solutions for the $n$th-order singular nonlocal boundary value problem
       \begin{equation}\label{eq1}
       \begin{cases}u^{(n)}(t) + \lambda a(t)f(t,u(t))=0, \ 0< t<1, \\
      u(0)= u^{\prime}(0)=\ldots =u^{(n-2)}(0)=0,\ u(1)=\int_{0}^{1}u(s)dA(s).
       \end{cases}
       \end{equation}
where $a(t)$ have singularity, $f(t,x)$ may also have singularity at $x=0$, and $\int_{0}^{1}u(s)dA(s)$ is given by a Riemann-Stieltjes integral with a signed measure.

In \cite{Wang}, Y. Wang et al., studied the following singular boundary value problem of a nonlocal fractional differential
equation

  \begin{equation}\label{eq2}
    \begin{cases} D^{\alpha}u(t) +q(t)f(t,u(t)) = 0, \ 0< t<1,\ n-1<\alpha\leq n, \\
   u(0)= u^{\prime}(0)=\ldots =u^{(n-2)}(0)=0,\ u(1)=\int_{0}^{1}u(s)dA(s),
    \end{cases}
    \end{equation}
   where $\alpha\geq2$, $D^{\alpha}$ is the standard Riemann-Liouville derivative. The existence and multiplicity  of positive
solutions are obtained by means of the fixed point index theory in cones.

 In \cite{Song}, Q. Song and Z. Bai considered the following boundary value problem of fractional differential equation
 with Riemann-Stieltjes integral boundary condition

  \begin{equation}\label{eq3}
    \begin{cases} D^{\alpha}u(t) +\lambda f(t,u(t),u(t)) = 0, \ 0< t<1,\ n-1<\alpha\leq n, \\
   u^{(k)}(0)=0,\ 0\leq k\leq n-2,\ u(1)=\int_{0}^{1}u(s)dA(s),
    \end{cases}
    \end{equation}
   where $D^{\alpha}$ is the Riemann-Liouville derivative. They used a fixed point theorem   and the properties of mixed monotone operator theory to get the existence and uniqueness of positive solutions.

 In \cite{Lv}, T. Lv et al., investigated the following fractional differential equation with multistrip
 Riemann-Stieltjes integral boundary conditions

\begin{equation}\label{eq4}
  \begin{cases} D^{\alpha}u(t) + f(t,u(t),D^{\beta}u(t)) = 0, \ t\in(0,1), \\   u(0)=D^{\beta}u(0)=0,\ u(1)=\sum_{i=1}^{m}\int_{I_{i}}u(s)dA(s),
  \end{cases}
  \end{equation}
   where $2<\alpha\leq 3$, $0<\beta< 1$, $I_{i}\subset (0,1)$, $i\in\{1,\dots ,m\}$, and $D^{\alpha}$
 is the standard Riemann-Liouville derivative. By using the Leggett-Williams fixed point theorem, the existence result of positive solutions is obtained.

In \cite{Zhang1}, X. Zhang et al., studies the existence and uniqueness
 of positive solutions of the Riemann-Liouville fractional differential equation

\begin{equation}\label{eq5}
  \begin{cases} D^{\alpha}u(t) + f(t,u(t),D^{\alpha-1}u(t),D^{\alpha-2}u(t),\dots,D^{\alpha-n+1}u(t)) = 0,
 \ t\in(0,1), \\
D^{\alpha-i}u(0)=0,\ i=3,4,\dots,n,\\
 D^{\alpha-2}u(0)=\int_{0}^{1}u(s)dB_{1}(s),\\
D^{\alpha-1}u(1)=\int_{0}^{1}u(s)dB_{2}(s),
  \end{cases}
  \end{equation}
where $D^{\alpha}$, $D^{\alpha-i}$ are the Riemann-Liouville fractional derivatives, $n-1<\alpha\leq n$, $n\geq3$ ($n\in\mathbb{N}$),
$f:[0,1]\times\mathbb{R}^{n}_{+}\rightarrow \mathbb{R}_{+}$ is continuous, $\mathbb{R}_{+}=[0,\infty)$,
$\int_{0}^{1}u(s)dB_{i}(s)$ are Riemann-Stieltjes integrals, where $B_{i}$ ($i=1,2$) are nondecreasing functions. The existence results
are abtained via the use of fixed point theorems on cones in partially ordered Banach spaces.

 In \cite{Zhang2}, X. Zhang et al., used the fixed point index theory in cones to study the existence of positive solutions to the singular fractional
differential equation with signed measure

\begin{equation}\label{eq6}
  \begin{cases} (-D^{\alpha}u)(t)=f(t,u(t),D^{\beta}u(t)), \ t\in(0,1), \\
  D^{\beta}u(0)=0,\  D^{\beta}u(1)=\int_{0}^{1}D^{\beta}u(s)dA(s),
  \end{cases}
  \end{equation}
where $D^{\alpha}$, $D^{\beta}$ are the Riemann-Liouville fractional derivatives, $\int_{0}^{1}u(s)dA(s)$ is
denoted by a Riemann-Stieltjes integral and $0<\beta\leq1$, $1<\alpha\leq 2$, $\alpha-\beta>1$, $A$ is a function of bounded variation
and $dA$ can be a signed measure, $f(t,x,y)$ may be singular at both $t=0,1$ and $x=y=0$.

In \cite{Cabada}, A. Cabada and G. Wang obtained a sufficient condition for the existence
of a positive solution to the fractional boundary value problem

\begin{equation}\label{eq7}
  \begin{cases} D^{\alpha}u(t)+f(t,u(t))=0, \ t\in(0,1),\ 2<\alpha<3 \\
 u(0)=u^{\prime\prime}(0)=0,\  u(1)=\lambda \int_{0}^{1}u(s)ds,\ 0<\lambda<2,
  \end{cases}
  \end{equation}
  where $D^{\alpha}$ is the Caputo fractional derivative and $f:[0,1]\times[0,\infty)\rightarrow [0,\infty)$ is a continuous function.

Motivated by the results mentioned above, and inspired by the work in \cite{Cabada}, in this paper, we are concerned with the following nonlocal fractional boundary value problem with a Riemann-Stieltjes integral boundary condition

\begin{equation}\label{eq8}
  \begin{cases}  D^{\alpha}u(t) +f(t,u(t)) = 0,\  t \in (0,1),\\
 u(0)= u^{\prime\prime} (0) =0,\ u(1)=\mu u(\eta)+ \beta{\gamma[u]},
  \end{cases}
  \end{equation}
where the function $f:[0,1]\times [0,\infty)\rightarrow [0,\infty)$ is continuous, $2<\alpha\leq3$, $\eta\in(0,1)$, $\mu,\beta\geq0$, $D^{\alpha}$ is the standard Caputo derivative. $\gamma$ is a continuous linear functional given by the Riemann-Stieltjes integral
 \[\gamma[u]=\int_{0}^{1}u(s)dA(s),\]
 with the function $A:[0,1]\rightarrow\mathbb{R}$ of bounded variation. $A$ can includes both sums and integrals, which implies that the nonlocal boundary conditions of Riemann-Stieltjes integral type is a more general case than the multi-point boundary condition and the integral boundary condition.  In the special case when $A(s)=s$, $\beta\in(0,2)$, and $\mu=0$, our problem reduces to the boundary value problem \eqref{eq7}. It is worth noting that our results generalize and extend results in \cite{Cabada}.

 The rest of the paper is organized as follows. In section 2, we present some theorems and lemmas that will be used to prove our main results. In section 3, we investigate the existence of at least one positive solution for \eqref{eq8}, and an example is given to illustrate our result.

 \section{Preliminaries}
 \begin{definition}
 Let $E$ be a real Banach space. A nonempty, closed, convex set $
 K\subset E$ is a cone if it satisfies the following two conditions:
 \begin{itemize}
\item[(i)]$\lambda K\subset K \ \text{for all}\ \lambda\geq0$;
\item[(ii)] $K\cap (-K)=\{0\}$.
 \end{itemize}
 If $\overline{K-K}=E$, i.e., the set $\{u-v: u,v\in K\}$ is dense in $E$, then $K$ is called a total cone. If $K-K=E$, $K$ is called a reproducing cone.
 \end{definition}

 \begin{definition}
 An operator $T:E\rightarrow E$ is completely continuous if it is continuous
 and maps bounded sets into relatively compact sets.
 \end{definition}
 \begin{definition}
The Riemann-Liouville fractional integral of order $\alpha$ for a continuous function $f$ is defined as
\begin{equation*}
I^{\alpha}f(t)=\frac{1}{\Gamma(\alpha)}\int_{0}^{t}\frac{f(s)}{(t-s)^{1-\alpha}}ds, \ \alpha>0,
\end{equation*}
provided the integral exists, where $\Gamma(.)$ is the gamma function, which is defined by $\Gamma(x)=\int_{0}^{\infty}t^{x-1}e^{-t}dt$.
\end{definition}

\begin{definition}
For at least n-times continuously differentiable function $f:[0,\infty)\rightarrow \mathbb{R}$, the Caputo derivative of fractional order $\alpha$ is defined as
\begin{equation*}
^{c}D^{\alpha}f(t)=\frac{1}{\Gamma(n-\alpha)}\int_{0}^{t}\frac{f^{(n)}(s)}{(t-s)^{\alpha+1-n}}ds,\ n-1<\alpha<n,\ n=[\alpha]+1,
\end{equation*}
where $[\alpha]$ denotes the integer part of the real number $\alpha$.
\end{definition}

\begin{lemma}[\cite{Kilbas2006}] \label{lem 2.1}
For $\alpha> 0$, the general solution of the fractional differential equation $^{c}D^{\alpha}x(t)=0$ is
given by
\begin{equation*}
x(t)=c_{0}+c_{1}t+...+c_{n-1}t^{n-1},
\end{equation*}
where $c_{i}\in \mathbb{R}$, $i = 0, 1,...,n-1 \ (n = [\alpha] + 1)$.
\end{lemma}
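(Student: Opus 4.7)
The plan is to prove the characterization in two directions. For the easy direction, any polynomial $p(t)=c_{0}+c_{1}t+\ldots+c_{n-1}t^{n-1}$ of degree at most $n-1$ satisfies $p^{(n)}(t)\equiv 0$, so the integrand in the defining integral of $^{c}D^{\alpha}p(t)$ is identically zero. Hence every such polynomial is a solution, verifying the forward implication.

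For the converse direction, I would invoke the Caputo composition identity
\begin{equation*}
I^{\alpha}\,{}^{c}D^{\alpha}x(t)=x(t)-\sum_{k=0}^{n-1}\frac{x^{(k)}(0)}{k!}t^{k},
\end{equation*}
valid for any $n$-times continuously differentiable function $x$. Applying $I^{\alpha}$ to both sides of $^{c}D^{\alpha}x(t)=0$ immediately yields $x(t)=\sum_{k=0}^{n-1}c_{k}t^{k}$ with $c_{k}=x^{(k)}(0)/k!$, which is precisely the asserted form. Together with the first paragraph this gives the ``general solution'' statement.

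The main obstacle is establishing the composition identity, and this is where the real work lies. I would begin by recognizing that the Caputo derivative can be rewritten as $^{c}D^{\alpha}x(t)=I^{n-\alpha}x^{(n)}(t)$, so that
\begin{equation*}
I^{\alpha}\,{}^{c}D^{\alpha}x(t)=\frac{1}{\Gamma(\alpha)\Gamma(n-\alpha)}\int_{0}^{t}(t-s)^{\alpha-1}\int_{0}^{s}(s-\tau)^{n-\alpha-1}x^{(n)}(\tau)\,d\tau\,ds.
\end{equation*}
Swapping the order of integration by Fubini's theorem and applying the substitution $s=\tau+(t-\tau)u$ to the inner integral produces the Beta function $B(\alpha,n-\alpha)=\Gamma(\alpha)\Gamma(n-\alpha)/\Gamma(n)$, so the double integral collapses to $I^{n}x^{(n)}(t)=\frac{1}{\Gamma(n)}\int_{0}^{t}(t-\tau)^{n-1}x^{(n)}(\tau)\,d\tau$. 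A final application of Taylor's formula with integral remainder (equivalently, $n$-fold integration by parts against $x^{(n)}$) converts this expression to $x(t)-\sum_{k=0}^{n-1}\frac{x^{(k)}(0)}{k!}t^{k}$, completing the derivation of the composition identity and hence of the lemma.
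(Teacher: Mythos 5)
Your proposal is correct, but note that the paper offers no proof of this lemma at all: it is quoted verbatim from the cited monograph of Kilbas, Srivastava and Trujillo, so there is no in-paper argument to compare against. Your derivation is the standard one from that source: the forward direction is immediate from $^{c}D^{\alpha}p = I^{n-\alpha}p^{(n)} = 0$ for polynomials of degree at most $n-1$, and the converse follows from the composition identity $I^{\alpha}\,{}^{c}D^{\alpha}x(t)=x(t)-\sum_{k=0}^{n-1}\frac{x^{(k)}(0)}{k!}t^{k}$, which you correctly establish via the semigroup property $I^{\alpha}I^{n-\alpha}=I^{n}$ (Fubini plus the Beta integral) and Taylor's formula with integral remainder. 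Amusingly, your logical order is the cleaner one: the paper states the composition identity \emph{as a consequence} of the lemma, whereas you prove the lemma \emph{from} the identity, which is how the result is actually obtained in the literature. One small point worth making explicit: the converse direction only makes sense for $x$ in the class where the Caputo derivative is defined (at least $n$-times continuously differentiable, as in the paper's Definition 2.4), and this hypothesis is exactly what licenses both the Fubini interchange and the Taylor expansion; you use it implicitly, and it would cost one sentence to say so.
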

According to Lemma \ref{lem 2.1}, it follows that
\begin{equation*}
{I^{\alpha}}\ {^{c}D^{\alpha}}x(t)=x(t)+c_{0}+c_{1}t+...+c_{n-1}t^{n-1},
\end{equation*}
for some $c_{i}\in \mathbb{R}$, $i = 0, 1,...,n-1 \ (n = [\alpha] + 1)$.

To prove our results, the following lemmas are needed in our discussion.

\begin{lemma}[\cite{Guo}] \label{lem 2.2}
Let $\mathcal{P}$ be a cone in Banach space $E$, and $\Omega$ be a bounded open subset of $E$ with
$\theta\in\Omega$. Suppose that $A:\mathcal{P}\cap\overline{\Omega}\rightarrow\mathcal{P}$ is a completely continuous
operator. If $\mu Au\neq u$ for every $u\in\mathcal{P}\cap\partial\Omega$ and $0 <\mu\leq 1$, then $i(A,\mathcal{P}\cap\Omega,\mathcal{P}) = 1$.
\end{lemma}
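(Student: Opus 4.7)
The plan is to reduce the statement to the homotopy invariance and normalization axioms of the fixed point index on a cone. Define the homotopy
\[
H:[0,1]\times(\mathcal{P}\cap\overline{\Omega})\longrightarrow \mathcal{P}, \qquad H(\mu,u)=\mu Au.
\]
Since $\mathcal{P}$ is convex and $0\in\mathcal{P}$, the image lies in $\mathcal{P}$ for every $\mu\in[0,1]$. Complete continuity of $H$ in $u$, uniformly in $\mu$, follows directly from complete continuity of $A$ together with continuity of scalar multiplication, so the standard fixed point index for completely continuous self-maps of cones applies.

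Next I would verify that the homotopy is admissible, i.e.\ that $H(\mu,u)\neq u$ for every $\mu\in[0,1]$ and every $u\in\mathcal{P}\cap\partial\Omega$. For $\mu\in(0,1]$ this is precisely the hypothesis of the lemma. For $\mu=0$ the equation $H(0,u)=u$ would force $u=\theta$, which is impossible: $\Omega$ is open and $\theta\in\Omega$, so $\theta\notin\partial\Omega$ and hence $\theta\notin\mathcal{P}\cap\partial\Omega$.

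Having verified admissibility, by the homotopy invariance property of the fixed point index one obtains
\[
i\bigl(A,\mathcal{P}\cap\Omega,\mathcal{P}\bigr)=i\bigl(H(1,\cdot),\mathcal{P}\cap\Omega,\mathcal{P}\bigr)=i\bigl(H(0,\cdot),\mathcal{P}\cap\Omega,\mathcal{P}\bigr)=i\bigl(\theta,\mathcal{P}\cap\Omega,\mathcal{P}\bigr),
\]
where $\theta$ denotes the constant zero operator. Finally, by the normalization axiom, the index of a constant operator whose value $\theta$ lies in $\mathcal{P}\cap\Omega$ equals $1$, giving $i(A,\mathcal{P}\cap\Omega,\mathcal{P})=1$.

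There is no genuine obstacle here; the only subtlety is the boundary check at $\mu=0$, which is handled by the assumption $\theta\in\Omega$ and openness of $\Omega$. The remainder of the argument is a textbook application of homotopy invariance and normalization for the cone-valued fixed point index, so I would not expect to grind through further calculations.
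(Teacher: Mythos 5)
Your proof is correct: the paper does not prove this lemma at all but simply cites it from the Guo--Lakshmikantham reference, and your homotopy argument (deforming $A$ to the constant map $\theta$ via $H(\mu,u)=\mu Au$, checking admissibility on $\mathcal{P}\cap\partial\Omega$ with the $\mu=0$ case handled by $\theta\in\Omega$ open, then invoking homotopy invariance and normalization) is precisely the standard proof given in that source. No gaps; the one point worth stating slightly more carefully is that $H$ is completely continuous jointly in $(\mu,u)$, which follows since $H([0,1]\times(\mathcal{P}\cap\overline{\Omega}))\subset\{\mu v:\mu\in[0,1],\ v\in A(\mathcal{P}\cap\overline{\Omega})\}$ is relatively compact.
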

\begin{lemma}  [\cite{Guo}] \label{lem 2.3}
Let $\mathcal{P}$ be a cone in Banach space $E$, and $\Omega$ be a bounded open subset of $E$. Suppose that $A:\mathcal{P}\cap\overline{\Omega}\rightarrow\mathcal{P}$ is a completely continuous
operator. If there exists $u_{0}\in \mathcal{P}\setminus\{0\}$ such that $u-Au\neq \mu u_{0}$ for every $u\in\mathcal{P}\cap\partial\Omega$ and $\mu\geq0$, then $i(A,\mathcal{P}\cap\Omega,\mathcal{P}) =0$.
\end{lemma}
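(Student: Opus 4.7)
The natural strategy is a homotopy argument based on the additive perturbation $A_{t} u := A u + t u_{0}$ for $t \geq 0$. First I would observe that $A_{0} = A$ and that specialising the hypothesis to $\mu = 0$ gives $A u \neq u$ for every $u \in \mathcal{P} \cap \partial \Omega$, so the index $i(A, \mathcal{P} \cap \Omega, \mathcal{P})$ is in fact defined.

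The central step will be to locate a sufficiently large $T > 0$ for which the perturbed operator $A_{T}$ has no fixed point in $\mathcal{P} \cap \overline{\Omega}$ at all. Since $\Omega$ is bounded and $A$ is completely continuous, the set $\{u - A u : u \in \mathcal{P} \cap \overline{\Omega}\}$ is bounded in $E$, say with norm at most some constant $M$. Choosing $T$ with $T \|u_{0}\| > M$ then rules out any solution of $u - A u = T u_{0}$ in $\mathcal{P} \cap \overline{\Omega}$, so by the solution (excision) property of the fixed point index one concludes $i(A_{T}, \mathcal{P} \cap \Omega, \mathcal{P}) = 0$.

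To finish I would invoke the homotopy invariance of the fixed point index on the cone $\mathcal{P}$ applied to the family $\{A_{t}\}_{t \in [0, T]}$. Each $A_{t}$ sends $\mathcal{P} \cap \overline{\Omega}$ into $\mathcal{P}$ because $\mathcal{P}$ is closed under addition and nonnegative scalar multiplication; it is completely continuous since the perturbation $t u_{0}$ is trivially compact; and the no-fixed-point-on-boundary condition $A_{t} u \neq u$ for every $u \in \mathcal{P} \cap \partial \Omega$ and every $t \in [0, T]$ is precisely the given hypothesis $u - A u \neq \mu u_{0}$ read with $\mu = t \geq 0$. Homotopy invariance then gives $i(A, \mathcal{P} \cap \Omega, \mathcal{P}) = i(A_{T}, \mathcal{P} \cap \Omega, \mathcal{P}) = 0$, which is the claim.

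The main potential obstacle is ensuring that the exclusion on $\partial \Omega$ holds uniformly along the whole interval $[0, T]$; fortunately the hypothesis is formulated for every $\mu \geq 0$ simultaneously, so this is built in and no separate argument is required. The remaining verifications — continuity of $(t, u) \mapsto A_{t} u$, compactness, and the boundedness of $(I - A)(\mathcal{P} \cap \overline{\Omega})$ used to pick $T$ — are routine consequences of the complete continuity of $A$ and the boundedness of $\Omega$.
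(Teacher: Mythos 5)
The paper itself gives no proof of this lemma: it is quoted verbatim from Guo--Lakshmikantham \cite{Guo}, so there is no in-paper argument to compare against. Your homotopy proof is correct and is the standard one for this result: the hypothesis at $\mu=0$ makes the index admissible; complete continuity of $A$ and boundedness of $\Omega$ give a bound $M$ on $\|u-Au\|$ over $\mathcal{P}\cap\overline{\Omega}$, so any $T>M/\|u_{0}\|$ (legitimate since $u_{0}\neq 0$) excludes fixed points of $u\mapsto Au+Tu_{0}$ and forces its index to vanish; and homotopy invariance along $A_{t}u=Au+tu_{0}$, $t\in[0,T]$, with boundary admissibility supplied exactly by the hypothesis for all $\mu\geq 0$, transfers the value $0$ back to $A$. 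One terminological quibble: the step yielding $i(A_{T},\mathcal{P}\cap\Omega,\mathcal{P})=0$ is the solution (existence) property of the fixed point index, not excision.
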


\begin{lemma}\label{lem 2.4} (Krein-Rutman, see\emph{\cite[Theorem 19.2]{ Deimling}})
Let $\mathcal{P}$ be a total cone in a real Banach space $E$ and let $L:E\rightarrow E$ be a compact linear operator with $L(\mathcal{P})\subset \mathcal{P}$. If $r(L)>0$, then there exists $\varphi\in \mathcal{P}\setminus\{0\}$ such that $L\varphi=r(L)\varphi$, where $r(L)$ is the spectral radius of $L$.
\end{lemma}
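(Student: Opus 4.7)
The plan is to combine the Riesz--Schauder spectral theory for compact operators with a resolvent-plus-compactness argument carried out inside the cone. First, since $L$ is compact and $r:=r(L)>0$, standard Riesz--Schauder theory tells us that every nonzero point of $\sigma(L)$ is an eigenvalue of finite algebraic multiplicity and that $\sigma(L)\setminus\{0\}$ can only accumulate at $0$; in particular $r\in\sigma(L)$. This already produces, after complexification if necessary, an eigenvector associated with $r$, but not a positive one, which is the content that must still be extracted.

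Next, for each $\lambda>r$ the Neumann series
\[
R_\lambda := (\lambda I - L)^{-1} = \sum_{n=0}^{\infty} \lambda^{-n-1} L^n
\]
converges in operator norm; since $L(\mathcal{P})\subset\mathcal{P}$ and $\mathcal{P}$ is closed, $R_\lambda$ maps $\mathcal{P}$ into $\mathcal{P}$. The key step is to produce some $u_0\in\mathcal{P}$ for which $\|R_\lambda u_0\|\to\infty$ as $\lambda\downarrow r$. Since $r\in\sigma(L)$ the operator norms $\|R_\lambda\|$ do blow up; if instead $\sup_{\lambda>r}\|R_\lambda u\|<\infty$ held for every $u\in\mathcal{P}$, then by linearity the same bound would hold on $\mathcal{P}-\mathcal{P}$, which is dense in $E$ by totality. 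Combined with the fact that $r$ is a pole of finite order of the resolvent of the compact operator $L$ (so that $\|R_\lambda\|\le C/(\lambda-r)^m$ near $r$), a Banach--Steinhaus type equicontinuity argument on this dense subspace contradicts $\|R_\lambda\|\to\infty$.

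Once such a $u_0$ is in hand, choose a sequence $\lambda_n\downarrow r$ and set
\[
v_n := \frac{R_{\lambda_n} u_0}{\|R_{\lambda_n} u_0\|}\in\mathcal{P},\qquad \|v_n\|=1.
\]
From $(\lambda_n I - L)R_{\lambda_n} u_0 = u_0$ we obtain
\[
L v_n = \lambda_n v_n - \frac{u_0}{\|R_{\lambda_n} u_0\|},
\]
where the last term tends to $0$. Compactness of $L$ yields a subsequence (still indexed by $n$) along which $Lv_n$ converges to some $w\in E$; the displayed identity then forces $v_n\to w/r=:\varphi$. Closedness of $\mathcal{P}$ gives $\varphi\in\mathcal{P}$, $\|v_n\|=1$ gives $\|\varphi\|=1\ne 0$, and continuity of $L$ yields $L\varphi=w=r\varphi$, which is the desired conclusion.

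The main obstacle is the existence of such a $u_0$: producing a positive element on which the resolvent norms blow up. Under stronger hypotheses (reproducing or solid cone) this is essentially immediate, but under the mere totality assumption stated here one must argue via the Laurent expansion of $R_\lambda$ around the pole $r$, using that its leading coefficient is a projection onto $\ker(rI-L)^k$ and that the range of this projection meets $\mathcal{P}-\mathcal{P}$ nontrivially. This is the technical heart of the proof, while the remaining steps are routine once that existence is secured.
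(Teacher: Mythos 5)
You should first note that the paper contains no proof of this lemma at all: it is the classical Krein--Rutman theorem, quoted from Deimling \cite[Theorem 19.2]{Deimling}, so your attempt can only be measured against the standard proofs. Your final limiting step is correct and is indeed the standard closing move: given $u_0\in\mathcal{P}$ with $\|R_{\lambda_n}u_0\|\to\infty$ as $\lambda_n\downarrow r$, the normalized vectors $v_n$ satisfy $Lv_n=\lambda_n v_n-u_0/\|R_{\lambda_n}u_0\|$, and compactness of $L$, closedness of $\mathcal{P}$, and $\|v_n\|=1$ produce the positive eigenvector with eigenvalue $r$. The positivity of $R_\lambda$ on $\mathcal{P}$ via the Neumann series is also correctly observed.

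However, there is a genuine and in fact circular gap at the heart of the argument. You assert that Riesz--Schauder theory gives ``in particular $r\in\sigma(L)$.'' It does not: for a compact operator it only yields some spectral point $\mu$ with $|\mu|=r(L)$, and $\mu$ may be non-real. (A rotation by $\pi/2$ on $\mathbb{R}^{2}$ is finite rank with complexified spectrum $\{\pm i\}$, so $r=1\notin\sigma$; of course it preserves no total cone, which is exactly the point.) The inclusion $r(L)\in\sigma(L)$ is precisely where the hypotheses $L(\mathcal{P})\subset\mathcal{P}$ and totality of $\mathcal{P}$ must be used, and it constitutes the substantive content of the theorem; every subsequent step of your sketch --- the blow-up of $\|R_\lambda\|$ as $\lambda\downarrow r$, the statement that ``$r$ is a pole of finite order of the resolvent,'' and the Laurent expansion ``around the pole $r$'' proposed to produce $u_0$ --- presupposes it, so the proposal assumes what it must prove. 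A second, independent flaw: the Banach--Steinhaus step is invalid as stated, since the uniform boundedness principle requires pointwise boundedness on a nonmeager set, and pointwise boundedness on the dense subspace $\mathcal{P}-\mathcal{P}$ is perfectly compatible with $\|R_\lambda\|\to\infty$ (compare the Fourier partial-sum operators on $L^{1}$, pointwise bounded on the dense set of trigonometric polynomials while their norms diverge). One can upgrade pointwise bounds on $\mathcal{P}$ to $\|R_\lambda u\|\le C\|u\|$ for $u\in\mathcal{P}$ by a Baire category argument on the closed cone, but for a cone that is merely total --- not reproducing --- this still gives no operator-norm bound on $E$, because a decomposition $x\approx u-v$ carries no control of $\|u\|,\|v\|$ in terms of $\|x\|$. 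The classical proofs close exactly this hole by a genuinely positivity-based device, e.g.\ transferring the resolvent singularity at the peripheral eigenvalue $\mu$ to the real point $r$ through the cone, or a Pringsheim-type argument exploiting that all Neumann coefficients $L^{n}u$ lie in $\mathcal{P}$; this missing ingredient is the technical heart that your final paragraph acknowledges but does not supply.
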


 Consider the nonlocal fractional boundary value problem
 \begin{equation}\label{eq9}
 D^{\alpha}u(t) +h(t) = 0,\  t \in (0,1),
 \end{equation}
 \begin{equation}\label{eq10}
  u(0)= u^{\prime\prime} (0) =0,\ u(1)=\mu u(\eta)+ \beta{\gamma[u]},
 \end{equation}
where $\alpha\in(2,3]$ and $h\in C[0,1]$.

For convenience, we denote $\Lambda= \mu \eta+ \beta \gamma[t]$. 
\begin{lemma}\label{lem 2.5}
Let $\Lambda \neq 1 $. Then for any $ h \in C[0,1] $, the fractional boundary value problem \eqref{eq9}-\eqref{eq10} has a unique solution which can be expressed by
$$ u(t) = \int_{0}^{1}  \mathcal{H}(t,s)h(s) ds ,$$
where
\begin{equation}\label{eq11}
 \mathcal{H}(t,s)=\frac{\beta {t}}{1-\Lambda}\int_{0}^{1}\mathcal{G}(t,s)dA(t)+\frac{\mu{t}}{1-\Lambda} \mathcal{G}(\eta,s)+\mathcal{G}(t,s),
 \end{equation}
and
\begin{equation}\label{eq12}
\mathcal{G}(t,s)=\frac{1}{\Gamma(\alpha)} \begin{cases}t(1-s)^{\alpha-1}-(t-s)^{\alpha-1}, & 0 \leq s \leq t \leq 1; \\
t(1-s)^{\alpha-1} , & 0 \leq t \leq s \leq 1.
\end{cases}
\end{equation}
\end{lemma}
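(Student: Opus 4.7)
The plan is to convert the fractional differential equation \eqref{eq9} into an integral representation by applying $I^{\alpha}$, pin down the three arbitrary constants using the three boundary conditions in \eqref{eq10}, and finally recast the answer in terms of the Green's function $\mathcal{G}$ associated with the simpler problem.

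First, since $2<\alpha\le 3$ we have $n=3$, so applying $I^{\alpha}$ to ${}^{c}D^{\alpha}u=-h$ and using the identity recorded right after Lemma~\ref{lem 2.1} yields
$$u(t)=-\frac{1}{\Gamma(\alpha)}\int_{0}^{t}(t-s)^{\alpha-1}h(s)\,ds+c_{0}+c_{1}t+c_{2}t^{2}$$
for some $c_{0},c_{1},c_{2}\in\mathbb{R}$. The condition $u(0)=0$ immediately forces $c_{0}=0$. Differentiating twice gives $u''(t)=2c_{2}-I^{\alpha-2}h(t)$, and since $I^{\alpha-2}h(0)=0$ for $\alpha>2$, the condition $u''(0)=0$ forces $c_{2}=0$. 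Thus $u(t)=-I^{\alpha}h(t)+c_{1}t$.

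To isolate the Green's function structure, observe that the piecewise definition \eqref{eq12} implies
$$-I^{\alpha}h(t)=\int_{0}^{1}\mathcal{G}(t,s)h(s)\,ds-\frac{t}{\Gamma(\alpha)}\int_{0}^{1}(1-s)^{\alpha-1}h(s)\,ds.$$
Absorbing the second term into the free parameter, rewrite $u(t)=v(t)+Kt$ where $v(t):=\int_{0}^{1}\mathcal{G}(t,s)h(s)\,ds$ and $K$ is a new constant. One verifies that $v(0)=v(1)=0$ and $v''(0)=0$, so the remaining nonlocal boundary condition $u(1)=\mu u(\eta)+\beta\gamma[u]$ collapses, by linearity of $\gamma$, to
$$K=\mu\bigl(v(\eta)+K\eta\bigr)+\beta\bigl(\gamma[v]+K\gamma[t]\bigr),$$
i.e., $K(1-\Lambda)=\mu v(\eta)+\beta\gamma[v]$. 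The hypothesis $\Lambda\ne 1$ makes this uniquely solvable, yielding both existence and uniqueness of $K$ (and hence of $u$).

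Substituting $K$ back, applying Fubini's theorem to rewrite $\gamma[v]=\int_{0}^{1}\!\bigl(\int_{0}^{1}\mathcal{G}(\tau,s)\,dA(\tau)\bigr)h(s)\,ds$, and collecting the coefficient of $h(s)\,ds$ produces the claimed kernel $\mathcal{H}(t,s)$ from \eqref{eq11}. The main technical nuisance is bookkeeping—keeping careful track of the sign of $I^{\alpha}$, the splitting of $-I^{\alpha}h$ via $\mathcal{G}$, and the Fubini exchange that produces the term $\int_{0}^{1}\mathcal{G}(\tau,s)\,dA(\tau)$—rather than any deep analytic obstacle.
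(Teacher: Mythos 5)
Your proposal is correct and takes essentially the same route as the paper: your constant $K$ is exactly the paper's unknown $u(1)$ (since $u(1)=v(1)+K=K$), your decomposition $u=v+Kt$ is the paper's identity \eqref{eq13}, and your scalar equation $K(1-\Lambda)=\mu v(\eta)+\beta\gamma[v]$ is the paper's \eqref{eq14} expressed after the Green's-function regrouping that the paper instead carries out at the end. Your early identification of $v(t)=\int_{0}^{1}\mathcal{G}(t,s)h(s)\,ds$ just streamlines the paper's final bookkeeping, and your justification of $c_{2}=0$ via $I^{\alpha-2}h(0)=0$ is a slightly more careful version of the paper's unelaborated appeal to $u''(0)=0$.
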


\begin{proof}
By applying Lemma \ref{lem 2.1} the equation \eqref{eq9} is equivalent to the following integral equation
\begin{equation*}
u(t)=-\int_{0}^{t}\frac{(t-s)^{\alpha-1}}{\Gamma(\alpha)}h(s)ds+c_{1}+c_{2}t+c_{3}t^{2},
\end{equation*}
where $c_{1}, c_{2}, c_{3} \in \mathbb{R}$ are arbitrary constants.
Conditions $u(0)=0$ and $u^{\prime \prime}(0)=0$ imply that $c_{1}=c_{3}=0$, i.e.,
\begin{equation*}
u(t)=-\int_{0}^{t}\frac{(t-s)^{\alpha-1}}{\Gamma(\alpha)}h(s)ds+c_{2}t.
\end{equation*}
In particular, we obtain
\begin{equation*}
c_{2}=u(1)+\int_{0}^{1}\frac{(1-s)^{\alpha-1}}{\Gamma(\alpha)}h(s)ds.
\end{equation*}
Therefore, we have
\begin{equation}\label{eq13}
u(t)=-\int_{0}^{t}\frac{(t-s)^{\alpha-1}}{\Gamma(\alpha)}h(s)ds+tu(1)+
\int_{0}^{1}\frac{t(1-s)^{\alpha-1}}{\Gamma(\alpha)}h(s)ds.
\end{equation}
From \eqref{eq13} and \eqref{eq10}, we get
\begin{eqnarray*}
u(1)& =&\mu \bigg(-\int_{0}^{\eta}\frac{(\eta-s)^{\alpha-1}}{\Gamma(\alpha)}h(s)ds+\eta u(1)+
\int_{0}^{1}\frac{\eta(1-s)^{\alpha-1}}{\Gamma(\alpha)}h(s)ds\bigg)\\
&&+\beta\int_{0}^{1}\bigg(-\int_{0}^{t}\frac{(t-s)^{\alpha-1}}{\Gamma(\alpha)}h(s)ds+tu(1)
+\int_{0}^{1}\frac{t(1-s)^{\alpha-1}}{\Gamma(\alpha)}h(s)ds\bigg)dA(t),
\end{eqnarray*}
which, on solving, yields
\begin{eqnarray}\label{eq14}
u(1)& =&\frac{-\mu}{1-\Lambda}\int_{0}^{\eta}\frac{(\eta-s)^{\alpha-1}}{\Gamma(\alpha)}h(s)ds+
\frac{ \mu \eta}{1-\Lambda}\int_{0}^{1}\frac{(1-s)^{\alpha-1}}{\Gamma(\alpha)}h(s)ds \nonumber\\
&&-\frac{\beta}{1-\Lambda}\int_{0}^{1}\bigg(\int_{0}^{t}\frac{(t-s)^{\alpha-1}}{\Gamma(\alpha)}h(s)ds\bigg)dA(t)\\
&&+\frac{\beta}{1-\Lambda}\int_{0}^{1}\bigg(\int_{0}^{1}\frac{t(1-s)^{\alpha-1}}{\Gamma(\alpha)}h(s)ds\bigg)dA(t)\nonumber.
\end{eqnarray}
Replacing \eqref{eq14} in \eqref{eq13}, we get

\begin{eqnarray*}
u(t)& =&-\int_{0}^{t}\frac{(t-s)^{\alpha-1}}{\Gamma(\alpha)}h(s)ds+\int_{0}^{1}\frac{t(1-s)^{\alpha-1}}{\Gamma(\alpha)}h(s)ds
-\frac{\mu t}{1-\Lambda}\int_{0}^{\eta}\frac{(\eta-s)^{\alpha-1}}{\Gamma(\alpha)}h(s)ds\\
&&+\frac{\mu\eta}{1-\Lambda}\int_{0}^{1}\frac{t(1-s)^{\alpha-1}}{\Gamma(\alpha)}h(s)ds
-\frac{\beta t}{1-\Lambda}\int_{0}^{1}\bigg(\int_{0}^{t}\frac{(t-s)^{\alpha-1}}{\Gamma(\alpha)}h(s)ds\bigg)dA(t)\\
&&+\frac{\beta t}{1-\Lambda}\int_{0}^{1}\bigg(\int_{0}^{1}\frac{t(1-s)^{\alpha-1}}{\Gamma(\alpha)}h(s)ds\bigg)dA(t)\\
&=&\int_{0}^{t}\frac{t(1-s)^{\alpha-1}-(t-s)^{\alpha-1}}{\Gamma(\alpha)}h(s)ds
+\int_{t}^{1}\frac{t(1-s)^{\alpha-1}}{\Gamma(\alpha)}h(s)ds\\
&&+\frac{\mu t}{1-\Lambda}\int_{0}^{\eta}\frac{\eta(1-s)^{\alpha-1}-(\eta-s)^{\alpha-1}}{\Gamma(\alpha)}h(s)ds
+\frac{\mu t}{1-\Lambda}\int_{\eta}^{1}\frac{\eta(1-s)^{\alpha-1}}{\Gamma(\alpha)}h(s)ds\\
&&+\frac{\beta t}{1-\Lambda}\int_{0}^{1}\bigg(\int_{0}^{t}\frac{t(1-s)^{\alpha-1}-(t-s)^{\alpha-1}}{\Gamma(\alpha)}h(s)ds
+\int_{t}^{1}\frac{t(1-s)^{\alpha-1}}{\Gamma(\alpha)}h(s)ds\bigg)dA(t)\\
&=&\int_{0}^{1}\mathcal{G}(t,s)h(s)ds+\frac{\mu t}{1-\Lambda}\int_{0}^{1}\mathcal{G}(\eta,s)h(s)ds
+\frac{\beta t}{1-\Lambda}\int_{0}^{1}\int_{0}^{1}\mathcal{G}(t,s)h(s)ds dA(t)\\
&=&\int_{0}^{1}\mathcal{H}(t,s)h(s)ds.
\end{eqnarray*}
\end{proof}

\begin{lemma}\label{lem 2.6} The Green's function $\mathcal{G}(t,s)$ defined by \eqref{eq12} satisfies
\begin{itemize}
\item[(i)] $ \mathcal{G}(t,s) \geq 0 $, for all\ $t, s \in [0,1],$
\item[(ii)] $(t-t^{\alpha-1})\Psi(s) \leq \mathcal{G}(t,s)\leq \Psi(s),$  \ for all \  $(t,s) \in [0,1] \times [0,1],$
where \[\Psi(s)=\frac{(1-s)^{\alpha-1}}{\Gamma(\alpha)}.\]
\end{itemize}
\end{lemma}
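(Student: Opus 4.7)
The plan is to verify both claims by splitting into the two cases $s\le t$ and $t\le s$ that appear in the piecewise definition \eqref{eq12} of $\mathcal G(t,s)$. In both cases I will use the fact that $\alpha-1\in(1,2]$, together with $t,s\in[0,1]$, to compare the two algebraic pieces $t(1-s)^{\alpha-1}$ and $(t-s)^{\alpha-1}$.

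For nonnegativity (i), the case $t\le s$ is immediate from the formula. For $0\le s\le t\le 1$, the idea is to write $(t-s)^{\alpha-1}=t^{\alpha-1}\bigl(1-\tfrac{s}{t}\bigr)^{\alpha-1}$ and observe that $\tfrac{s}{t}\ge s$ (since $t\le 1$), so $\bigl(1-\tfrac{s}{t}\bigr)^{\alpha-1}\le(1-s)^{\alpha-1}$; combining with $t^{\alpha-1}\le t$ (which holds because $\alpha-1\ge 1$ and $t\in[0,1]$) gives $(t-s)^{\alpha-1}\le t(1-s)^{\alpha-1}$, so the bracket in \eqref{eq12} is nonnegative. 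Throughout, $\Gamma(\alpha)>0$ plays no role beyond being a positive scalar.

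For the upper bound in (ii), I would just drop the nonnegative term $(t-s)^{\alpha-1}$ in the case $s\le t$ to get $\mathcal G(t,s)\le t(1-s)^{\alpha-1}/\Gamma(\alpha)\le\Psi(s)$, and in the case $t\le s$ use $t\le 1$ directly. The lower bound is the only step that requires a small computation. In the case $t\le s$ I have $\mathcal G(t,s)=t\Psi(s)\ge(t-t^{\alpha-1})\Psi(s)$ because $t^{\alpha-1}\ge 0$. In the case $s\le t$, after multiplying through by $\Gamma(\alpha)$ the desired inequality
\[
(t-t^{\alpha-1})(1-s)^{\alpha-1}\le t(1-s)^{\alpha-1}-(t-s)^{\alpha-1}
\]
rearranges to $(t-s)^{\alpha-1}\le t^{\alpha-1}(1-s)^{\alpha-1}=(t-ts)^{\alpha-1}$. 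Since $t\le 1$ gives $ts\le s$, i.e.\ $t-ts\ge t-s\ge 0$, and the function $x\mapsto x^{\alpha-1}$ is nondecreasing on $[0,\infty)$, this last inequality holds.

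The only mildly subtle point is the lower bound in the regime $s\le t$: one has to rewrite it in the cleaner form $(t-s)^{\alpha-1}\le (t-ts)^{\alpha-1}$ before the monotonicity argument becomes transparent. Everything else is a direct reading of \eqref{eq12} together with $t^{\alpha-1}\le t$ on $[0,1]$ for $\alpha-1\ge 1$.
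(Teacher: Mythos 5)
Your proposal is correct and follows essentially the same route as the paper: a case split along the piecewise definition of $\mathcal{G}$, with everything reduced to the key inequality $(t-s)^{\alpha-1}\leq t^{\alpha-1}(1-s)^{\alpha-1}$ together with $t^{\alpha-1}\leq t$ on $[0,1]$. Your rewriting of that inequality as $(t-s)^{\alpha-1}\leq (t-ts)^{\alpha-1}$ via $t-ts\geq t-s\geq 0$ is a cosmetic (and slightly cleaner, since it avoids dividing by $t$) variant of the paper's factorization $(t-s)^{\alpha-1}=t^{\alpha-1}\bigl(1-\tfrac{s}{t}\bigr)^{\alpha-1}$, which you yourself use in part (i).
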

\begin{proof}
\rm{(i)} If $0\leq s \leq t$, then
\begin{eqnarray*}
\Gamma(\alpha)\mathcal{G}(t,s)& =& t(1-s)^{\alpha-1}-(t-s)^{\alpha-1}\\
& =& t(1-s)^{\alpha-1}-t^{\alpha-1}\bigg(1-\frac{s}{t}\bigg)^{\alpha-1}\\
& \geq&t(1-s)^{\alpha-1}-t^{\alpha-1}(1-s)^{\alpha-1}\\
& =& (t-t^{\alpha-1})(1-s)^{\alpha-1}\\
& \geq& 0,\ \ \text{since}\ (t-t^{\alpha-1})\geq0,\ \text{for all}\ t\in[0,1].
\end{eqnarray*}
For $s\geq t$, we have $\Gamma(\alpha)\mathcal{G}(t,s)=t(1-s)^{\alpha-1}\geq 0.$\\
So,
$$\mathcal{G}(t,s)\geq 0,\ \text{for all}\ t, s \in [0,1].$$
\rm{(ii)}
If $s = 1$, then the result follows immediately. Now we suppose that $(t, s)\in[0,1]\times[0, 1)$.
If $t\leq s$, then
\[\Gamma(\alpha)\mathcal{G}(t,s)=t(1-s)^{\alpha-1}\leq(1-s)^{\alpha-1}.\]
So,
\[\mathcal{G}(t,s)\leq\frac{1}{\Gamma(\alpha)}(1-s)^{\alpha-1}=\Psi(s).\]
On the other hand, we have
\[\Gamma(\alpha)\mathcal{G}(t,s)=t(1-s)^{\alpha-1}\geq(t-t^{\alpha-1})(1-s)^{\alpha-1}.\]
So,
\[\mathcal{G}(t,s)\geq \Psi(s)(t-t^{\alpha-1}).\]
If $s\leq t$, then
\[\frac{\mathcal{G}(t,s)}{\Psi(s)}=\frac{t(1-s)^{\alpha-1}-(t-s)^{\alpha-1}}{(1-s)^{\alpha-1}}\leq\frac{t(1-s)^{\alpha-1}}{(1-s)^{\alpha-1}}\leq1.\]
So,
\[\mathcal{G}(t,s)\leq \Psi(s).\]
On the other hand, we have
\begin{eqnarray*}
\frac{\mathcal{G}(t,s)}{\Psi(s)}& =&\frac{t(1-s)^{\alpha-1}-(t-s)^{\alpha-1}}{(1-s)^{\alpha-1}}\\
& \geq&\frac{(t-t^{\alpha-1})(1-s)^{\alpha-1}}{(1-s)^{\alpha-1}}\\
& =& (t-t^{\alpha-1}).
\end{eqnarray*}
So,
\[\mathcal{G}(t,s)\geq \Psi(s)(t-t^{\alpha-1}).\]
Thus,
\[(t-t^{\alpha-1})\Psi(s) \leq \mathcal{G}(t,s)\leq\Psi(s),\ \text{for all}\ t, s \in [0,1].\]
\end{proof}

In the remainder of this paper, we always assume that
\begin{itemize}
 \item[(H1)] $0\leq\Lambda<1;$
\item[(H2)] $A$ is a function of bounded variation, and $\textsl{g}_{A}(s)\geq0,\ \text{for all}\ s\in[0,1]$, where $\textsl{g}_{A}(s)=\int_{0}^{1}\mathcal{G}(t,s)dA(t).$
\end{itemize}

\begin{lemma}\label{lem 2.7} $ \mathcal{H}(t,s)$ defined by \eqref{eq11} satisfies
\begin{itemize}
\item[(i)] $ \mathcal{H}(t,s) \geq 0 $, for all\ $t, s \in [0,1],$
\item[(ii)] $\rho(t)\Phi(s) \leq \mathcal{H}(t,s)\leq \Phi(s),$  \ for all \  $(t,s) \in [0,1] \times [0,1],$
where $$ \Phi(s)=\frac{\beta}{1-\Lambda} \textsl{g}_{A}(s)+\frac{\mu-\Lambda+1}{1-\Lambda}\Psi(s),$$  and
\[\rho(t)=(\eta-\eta^{\alpha-1})(t-t^{\alpha-1}).\]
\end{itemize}
\end{lemma}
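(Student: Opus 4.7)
The plan is to treat the three summands appearing in the definition \eqref{eq11} of $\mathcal{H}(t,s)$ independently and bound each one using Lemma~\ref{lem 2.6}, then combine the estimates. Hypotheses (H1) and (H2) guarantee that the prefactors $\tfrac{1}{1-\Lambda}$ are positive and that $\textsl{g}_A(s)\geq 0$, so everything we manipulate has the right sign.

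For part (i), I would simply observe that since $\beta,\mu\geq 0$, $t\in[0,1]$, $0\leq\Lambda<1$, $\textsl{g}_A(s)\geq 0$ by (H2), and $\mathcal{G}(t,s),\mathcal{G}(\eta,s)\geq 0$ by Lemma~\ref{lem 2.6}(i), each of the three terms in \eqref{eq11} is nonnegative, giving $\mathcal{H}(t,s)\geq 0$.

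For the upper bound in (ii), I would bound $t\leq 1$, $\mathcal{G}(t,s)\leq\Psi(s)$, and $\mathcal{G}(\eta,s)\leq\Psi(s)$ (using Lemma~\ref{lem 2.6}(ii)) in each piece of \eqref{eq11}. This yields
\[
\mathcal{H}(t,s)\leq \frac{\beta}{1-\Lambda}\textsl{g}_A(s)+\frac{\mu}{1-\Lambda}\Psi(s)+\Psi(s),
\]
and a direct simplification of the last two terms using $\tfrac{\mu}{1-\Lambda}+1=\tfrac{\mu-\Lambda+1}{1-\Lambda}$ gives exactly $\Phi(s)$.

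The lower bound is where care is needed and is the main obstacle. Writing $\rho(t)=(\eta-\eta^{\alpha-1})(t-t^{\alpha-1})$ and expanding
\[
\rho(t)\Phi(s)=(\eta-\eta^{\alpha-1})(t-t^{\alpha-1})\,\frac{\beta}{1-\Lambda}\textsl{g}_A(s)+(\eta-\eta^{\alpha-1})(t-t^{\alpha-1})\left[\frac{\mu}{1-\Lambda}+1\right]\Psi(s),
\]
I would compare it termwise with the lower estimate
\[
\mathcal{H}(t,s)\geq \frac{\beta t}{1-\Lambda}\textsl{g}_A(s)+\frac{\mu t}{1-\Lambda}(\eta-\eta^{\alpha-1})\Psi(s)+(t-t^{\alpha-1})\Psi(s),
\]
which follows from Lemma~\ref{lem 2.6}(ii) applied to $\mathcal{G}(t,s)$ and $\mathcal{G}(\eta,s)$. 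The required three pointwise inequalities reduce respectively to $t\geq(\eta-\eta^{\alpha-1})(t-t^{\alpha-1})$, $t\geq t-t^{\alpha-1}$, and $1\geq\eta-\eta^{\alpha-1}$, all of which are immediate on $[0,1]$ since $\eta-\eta^{\alpha-1}\in[0,1]$ and $t^{\alpha-1}\geq 0$. Summing the three inequalities produces $\mathcal{H}(t,s)\geq\rho(t)\Phi(s)$. The only subtlety is making sure the coefficient $\tfrac{\mu-\Lambda+1}{1-\Lambda}$ in $\Phi(s)$ is correctly split into a $\tfrac{\mu}{1-\Lambda}$ piece (absorbed by the $\mu$-term of $\mathcal{H}$) and a $1$-piece (absorbed by $\mathcal{G}(t,s)$); no additional structure on $A$ beyond (H2) is required because the $\textsl{g}_A$ comparison only uses the positivity of the prefactor $\tfrac{\beta t}{1-\Lambda}$.
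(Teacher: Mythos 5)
Your proposal is correct and follows essentially the same route as the paper: both treat the three summands of $\mathcal{H}(t,s)$ in \eqref{eq11} separately, apply Lemma~\ref{lem 2.6}(ii) to $\mathcal{G}(t,s)$ and $\mathcal{G}(\eta,s)$, and use the elementary facts $t-t^{\alpha-1}\leq t$ and $\eta-\eta^{\alpha-1}\leq 1$ together with the identity $\tfrac{\mu}{1-\Lambda}+1=\tfrac{\mu-\Lambda+1}{1-\Lambda}$. Your termwise comparison against the expanded $\rho(t)\Phi(s)$ is just a reorganization of the paper's chained inequalities, and all three reduced inequalities you cite do hold on $[0,1]$.
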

\begin{proof}
\begin{eqnarray*}
\mathcal{H}(t,s)& =&\frac{\beta t}{1-\Lambda} \textsl{g}_{A}(s)+\frac{\mu t}{1-\Lambda}\mathcal{G}(\eta,s)+\mathcal{G}(t,s)\\
&\leq&\frac{\beta }{1-\Lambda} \textsl{g}_{A}(s)+\frac{\mu}{1-\Lambda}\Psi(s)+\Psi(s)\\
&\leq& \frac{\beta}{1-\Lambda} \textsl{g}_{A}(s)+\frac{\mu-\Lambda+1}{1-\Lambda}\Psi(s).
\end{eqnarray*}
\begin{eqnarray*}
\mathcal{H}(t,s)& =&\frac{\beta t}{1-\Lambda} \textsl{g}_{A}(s)+\frac{\mu t}{1-\Lambda}\mathcal{G}(\eta,s)+\mathcal{G}(t,s)\\
&\geq&\frac{\beta (t-t^{\alpha-1})}{1-\Lambda} \textsl{g}_{A}(s)+\frac{\mu (t-t^{\alpha-1})}{1-\Lambda}\Psi(s)(\eta-\eta^{\alpha-1})+\Psi(s)(t-t^{\alpha-1})\\
&\geq& \frac{\beta (t-t^{\alpha-1})}{1-\Lambda} \textsl{g}_{A}(s)(\eta-\eta^{\alpha-1})+\frac{\mu (t-t^{\alpha-1})}{1-\Lambda}\Psi(s)(\eta-\eta^{\alpha-1})\\
&&+(\eta-\eta^{\alpha-1})\Psi(s)(t-t^{\alpha-1})\\
&\geq& \bigg(\frac{\beta}{1-\Lambda} \textsl{g}_{A}(s)+\frac{\mu-\Lambda+1}{1-\Lambda}\Psi(s)\bigg)(\eta-\eta^{\alpha-1})(t-t^{\alpha-1}).
\end{eqnarray*}
 \end{proof}

Set $E=C([0,1],\mathbb{R})$. It is well known that $E$ is a Banach space with the norm $\|u\|=\\sup_{t\in[0, 1]}|u(t)|$. Let $\mathcal{P}= \left\{ u \in E : u(t)\geq0,\ t\in[0,1]\right\}$. It is easy to see that $\mathcal{P}$ is a cone in $E$. Let us define a nonlinear operator $\mathcal{A} : \mathcal{P} \rightarrow E$ by
 \begin{equation}\label{eq15}
\mathcal{A}u(t)= \int_{0}^{1} \mathcal{H}(t,s) f(s , u(s)) ds ,\ u\in E,
 \end{equation}
 where $ \mathcal{H}(t,s) $ is defined by \eqref{eq11}. Then the existence of a positive solution of the boundary value problem \eqref{eq8} is equivalent to the existence of a nontrivial fixed point of $\mathcal{A}$ on $\mathcal{P}$.
For $a>0$, we define linear operator $\mathcal{K}_{a}$ by
\[\mathcal{K}_{a}=a\mathcal{K},\ \text{where}\ (\mathcal{K}u)(t)=\int_{0}^{1}\mathcal{H}(t,s)u(s)ds,\ u\in E.\]
Clearly, $\mathcal{K}_{a}:\mathcal{P}\rightarrow \mathcal{P}$ is a completely continuous linear operator. By virtue of the Arzela-Ascoli theorem, $ \mathcal{A}$ is completely continuous and satisfies $ \mathcal{A}\mathcal{P} \subset \mathcal{P}$.

\begin{lemma}\label{lem 2.8}
 The spectral radius of the operator $\mathcal{K}_{a}$ satisfies
\begin{equation}\label{eq16}
a\tau_{1}\leq r(\mathcal{K}_{a})\leq a\tau_{2},
\end{equation}
where
\begin{align}
\tau_{1}&=\frac{\eta-\eta^{\alpha-1}}{1-\Lambda}\int_{0}^{1}(s-s^{\alpha-1})\bigg(\beta \textsl{g}_{A}(s)+(\mu-\Lambda+1)\Psi(s)\bigg)ds \label{eq17}\\
\tau_{2}&=\frac{1}{1-\Lambda}\bigg(\beta\int_{0}^{1}
\textsl{g}_{A}(s)ds+\frac{\mu-\Lambda+1}{\Gamma(\alpha+1)}\bigg)\label{eq18}
\end{align}
\end{lemma}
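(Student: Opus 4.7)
The plan is to use Gelfand's spectral radius formula $r(\mathcal{K}_{a}) = \lim_{n\to\infty}\|\mathcal{K}_{a}^{n}\|^{1/n}$ together with the two-sided pointwise estimate on $\mathcal{H}(t,s)$ supplied by Lemma \ref{lem 2.7}. Since $\mathcal{K}_{a}=a\mathcal{K}$ one has $r(\mathcal{K}_{a}) = a\,r(\mathcal{K})$, so it is enough to prove $\tau_{1}\leq r(\mathcal{K})\leq \tau_{2}$.

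For the upper bound, I would estimate the operator norm of $\mathcal{K}$ directly. Using $\mathcal{H}(t,s)\leq \Phi(s)$ from Lemma \ref{lem 2.7}(ii),
\[
\|\mathcal{K}u\| = \sup_{t\in[0,1]} \left|\int_{0}^{1}\mathcal{H}(t,s)u(s)\,ds\right| \leq \|u\|\int_{0}^{1}\Phi(s)\,ds,
\]
so $\|\mathcal{K}\|\leq \int_{0}^{1}\Phi(s)\,ds$. Splitting this integral according to the definition of $\Phi$ and invoking the elementary identity $\int_{0}^{1}\Psi(s)\,ds = 1/\Gamma(\alpha+1)$ shows that $\int_{0}^{1}\Phi(s)\,ds = \tau_{2}$. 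Since $r(\mathcal{K})\leq \|\mathcal{K}\|$, this gives $r(\mathcal{K}_{a})\leq a\tau_{2}$.

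For the lower bound I would introduce the test function $\phi(t):=t-t^{\alpha-1}$, which belongs to $\mathcal{P}\setminus\{0\}$ for $\alpha\in(2,3]$. Combining $\mathcal{H}(t,s)\geq \rho(t)\Phi(s) = (\eta-\eta^{\alpha-1})\phi(t)\Phi(s)$ with the definition of $\tau_{1}$ produces the key comparison
\[
(\mathcal{K}\phi)(t) \geq (\eta-\eta^{\alpha-1})\phi(t)\int_{0}^{1}\Phi(s)(s-s^{\alpha-1})\,ds \;=\; \tau_{1}\,\phi(t).
\]
Because the kernel $\mathcal{H}$ is nonnegative (Lemma \ref{lem 2.7}(i)), $\mathcal{K}$ is order-preserving on $\mathcal{P}$, so induction yields $\mathcal{K}^{n}\phi \geq \tau_{1}^{n}\phi$ pointwise, and hence $\|\mathcal{K}^{n}\|\cdot\|\phi\|\geq \|\mathcal{K}^{n}\phi\|\geq \tau_{1}^{n}\|\phi\|$. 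Taking $n$th roots and passing to the limit via Gelfand's formula gives $r(\mathcal{K})\geq \tau_{1}$, whence $r(\mathcal{K}_{a})\geq a\tau_{1}$.

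The main obstacle is the lower bound, which rests on selecting a test function whose image under $\mathcal{K}$ admits a clean pointwise comparison with the function itself; the choice $\phi(t)=t-t^{\alpha-1}$ is dictated precisely by the factor $\rho(t)$ in the lower estimate of $\mathcal{H}$. Once $\phi$ is in hand, the iteration step is routine and is what converts the single-step inequality $\mathcal{K}\phi\geq \tau_{1}\phi$ into a spectral lower bound via Gelfand. The Krein--Rutman theorem (Lemma \ref{lem 2.4}) is not needed for this lemma itself but will justify the existence of an eigenfunction once $r(\mathcal{K})>0$ has been established above.
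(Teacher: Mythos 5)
Your proof is correct, and it reaches the bounds by a partly different route than the paper. For the upper bound you and the paper do essentially the same thing: both reduce to $\|\mathcal{K}\|\leq\int_{0}^{1}\Phi(s)\,ds=\tau_{2}$ using $\mathcal{H}(t,s)\leq\Phi(s)$ and $\int_{0}^{1}\Psi(s)\,ds=1/\Gamma(\alpha+1)$; the paper then iterates to $\|\mathcal{K}^{n}\|\leq\tau_{2}^{n}$ and invokes Gelfand, which is redundant next to your direct use of $r(\mathcal{K})\leq\|\mathcal{K}\|$. The real divergence is in the lower bound. The paper estimates the $n$-fold iterated kernel from below: inserting $\mathcal{H}(t,s)\geq\rho(t)\Phi(s)$ into every factor of $\mathcal{H}(t,s_{n-1})\mathcal{H}(s_{n-1},s_{n-2})\cdots\mathcal{H}(s_{1},s)$ and integrating pairs each $\rho(s_{i})$ with $\Phi(s_{i})$, giving $\|\mathcal{K}^{n}\|\geq c\,\bigl(\int_{0}^{1}\rho(s)\Phi(s)\,ds\bigr)^{n-1}$ with the $n$-independent prefactor $c=\max_{t}\rho(t)\int_{0}^{1}\Phi(s)\,ds$ (which the paper evaluates explicitly only to have it vanish in the $n$-th root limit), and then Gelfand yields $r(\mathcal{K})\geq\int_{0}^{1}\rho(s)\Phi(s)\,ds=\tau_{1}$ --- exactly the identity you also verified. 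You instead compress the same kernel estimate into the single subeigenfunction inequality $\mathcal{K}\phi\geq\tau_{1}\phi$ with $\phi(t)=t-t^{\alpha-1}\in\mathcal{P}\setminus\{0\}$, and iterate via the positivity of the kernel to get $\|\mathcal{K}^{n}\|\geq\tau_{1}^{n}$. Your version buys a cleaner argument: no multilinear bookkeeping, no spurious prefactor, and in fact once you have $\mathcal{K}\phi\geq\tau_{1}\phi$ you could bypass Gelfand entirely by the standard Krasnosel'skii-type fact that a positive operator dominating $\tau_{1}$ on some nonzero cone element satisfies $r(\mathcal{K})\geq\tau_{1}$; the paper's version keeps everything at the level of explicit kernel integrals, requiring nothing about order-preservation beyond Lemma \ref{lem 2.7}. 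Your closing remark is also accurate: Lemma \ref{lem 2.4} (Krein--Rutman) plays no role in this lemma in the paper either, and is used only in the proof of Theorem \ref{th1}.
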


\begin{proof}
It is easy to show that $\|\mathcal{K}\|=\max_{t\in[0,1]}\int_{0}^{1}\mathcal{H}(t,s)ds.$ By Lemma \ref{lem 2.7} and the definition of $\mathcal{G}(t,s)$, we have
\begin{equation*}
\begin{split}
 \|\mathcal{K}\|&=\max_{t\in[0,1]}\int_{0}^{1}\mathcal{H}(t,s)ds\\
&\leq \int_{0}^{1}\Phi(s)ds\\
&= \frac{\beta}{1-\Lambda}\int_{0}^{1}\textsl{g}_{A}(s)ds+\frac{\mu-\Lambda+1}{1-\Lambda}\int_{0}^{1}\Psi(s)ds\\
&=\tau_{2}.
\end{split}
\end{equation*}
Hence for any $n\in \mathbb{N}^{\star}$, using the recursive approach, we get
\begin{equation*}
\begin{split}
 \|\mathcal{K}^{n}\|&=\max_{t\in[0,1]}\underbrace{\int_{0}^{1}\int_{0}^{1}\ldots \int_{0}^{1}}_{n\ \mathrm{times}}\mathcal{H}(t,s_{n-1})
\mathcal{H}(s_{n-1},s_{n-2})\ldots\mathcal{H}(s_{1},s)ds_{n-1}ds_{n-2}\ldots ds\\
&\leq (\tau_{2})^{n}.
\end{split}
\end{equation*}
By this inequality and the Gelfand formula of spectral radius, we obtain that
\[r(\mathcal{K})=\lim_{n\rightarrow\infty}\sqrt[n]{\|\mathcal{K}^{n}\|}\leq \tau_{2}.\]
On the other hand, for all $n\in \mathbb{N}^{\star}$, we have
\begin{equation*}
\begin{split}
 \|\mathcal{K}^{n}\|&=\max_{t\in[0,1]}\underbrace{\int_{0}^{1}\int_{0}^{1}\ldots \int_{0}^{1}}_{n\ \mathrm{times}}\mathcal{H}(t,s_{n-1})
\mathcal{H}(s_{n-1},s_{n-2})\ldots\mathcal{H}(s_{1},s)ds_{n-1}ds_{n-2}\ldots ds\\
&\geq\max_{t\in[0,1]}\underbrace{\int_{0}^{1}\int_{0}^{1}\ldots \int_{0}^{1}}_{n\ \mathrm{times}}\rho(t)\Phi(s_{n-1})
\rho(s_{n-1})\Phi(s_{n-2})\ldots\rho(s_{1})\Phi(s)ds_{n-1}ds_{n-2}\ldots ds\\
&\geq \max_{t\in [0,1]}\rho(t)\int_{0}^{1}\Phi(s)ds \bigg(\int_{0}^{1}\rho(s)\Phi(s)ds\bigg)^{n-1}\\
&\geq \tau_{2}(\eta-\eta^{\alpha-1})\frac{\alpha-2}{(\alpha-1)^{\frac{\alpha-1}{\alpha-2}}}\bigg(\int_{0}^{1}\rho(s)\Phi(s)ds\bigg)^{n-1}.
\end{split}
\end{equation*}
From Gelfand's formula, we get
\[r(\mathcal{K})=\lim_{n\rightarrow\infty}\sqrt[n]{\|\mathcal{K}^{n}\|}\geq \bigg(\int_{0}^{1}\rho(s)\Phi(s)ds\bigg)=\tau_{1}.\]
\end{proof}

\section{Main result}
By imposing appropriate assumptions upon nonlinearity $f(t,u)$, we can prove the following existence result for problem \eqref{eq8}.
\begin{theorem}\label{th1}
Let $f:[0,1]\times [0,\infty)\rightarrow [0,\infty)$ be continuous. If $f$ satisfies the following
conditions
\begin{itemize}
\item[(C1)] There exist $a\in(0,\tau_{2}^{-1})$ and $c>0$ such that
\[f(t,x)\leq ax+c,\ \text{for}\ (t,x)\in [0,1]\times[0,\infty);\]
\item[(C2)] There exist $b\in[\tau_{1}^{-1},\infty)$ and $\delta>0$ such that
\[f(t,x)\geq bx,\ \text{for}\ (t,x)\in [0,1]\times[0,\delta],\]
\end{itemize}
where $\tau_{1}$ and $\tau_{2}$ are given by \eqref{eq17}-\eqref{eq18}. Then BVP \eqref{eq8} has at least one positive solution.
\end{theorem}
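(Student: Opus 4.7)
The plan is to locate a fixed point of $\mathcal{A}$ in an annular region $\{u\in\mathcal{P}: \delta \le \|u\|\le R\}$ via the fixed point index in the cone $\mathcal{P}$. Specifically, using condition (C1) I will show $i(\mathcal{A},\mathcal{P}\cap \Omega_R,\mathcal{P})=1$ for some sufficiently large ball $\Omega_R=\{u\in E:\|u\|<R\}$ by means of Lemma~\ref{lem 2.2}, and using (C2) together with a Krein--Rutman eigenvector I will show $i(\mathcal{A},\mathcal{P}\cap \Omega_\delta,\mathcal{P})=0$ for $\Omega_\delta=\{u\in E:\|u\|<\delta\}$ by means of Lemma~\ref{lem 2.3}. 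Additivity of the index then produces a fixed point in $\mathcal{P}\cap(\Omega_R\setminus \overline{\Omega_\delta})$, which is a nontrivial element of $\mathcal{P}$ and hence the desired positive solution of \eqref{eq8}. The link between the hypotheses and the two index computations will be the spectral-radius estimates $r(\mathcal{K}_a)\le a\tau_2<1$ and $r(\mathcal{K}_b)\ge b\tau_1\ge 1$ supplied by Lemma~\ref{lem 2.8}.

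For the large-ball step, suppose $u\in\mathcal{P}$ with $\|u\|=R$ and $\mu\in(0,1]$ satisfy $\mu\mathcal{A}u=u$. Using $\mathcal{H}\ge 0$ and (C1), one has pointwise
\[u(t)\le \mathcal{A}u(t)=\int_0^1\mathcal{H}(t,s)f(s,u(s))\,ds\le (\mathcal{K}_a u)(t)+cv(t),\]
where $v(t):=\int_0^1\mathcal{H}(t,s)\,ds$ is a fixed bounded function. Since $a<\tau_2^{-1}$, Lemma~\ref{lem 2.8} gives $r(\mathcal{K}_a)<1$, whence $\|\mathcal{K}_a^n\|\to 0$ by Gelfand's formula and the Neumann series $\sum_{k\ge 0}\mathcal{K}_a^k$ converges in operator norm to $(I-\mathcal{K}_a)^{-1}$. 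Iterating the above positivity-preserving inequality and letting $n\to\infty$ yields the a priori estimate
\[u\le \sum_{k=0}^\infty \mathcal{K}_a^k(cv)=c(I-\mathcal{K}_a)^{-1}v,\qquad \|u\|\le M:=\|c(I-\mathcal{K}_a)^{-1}v\|.\]
Choosing $R>\max\{M,\delta\}$ excludes such $u$, and Lemma~\ref{lem 2.2} yields $i(\mathcal{A},\mathcal{P}\cap\Omega_R,\mathcal{P})=1$.

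For the small-ball step, I note that $\mathcal{P}$ is a reproducing (hence total) cone in $E=C([0,1],\mathbb{R})$ and $r(\mathcal{K}_b)\ge b\tau_1\ge 1>0$ by Lemma~\ref{lem 2.8}, so Lemma~\ref{lem 2.4} produces $\varphi\in\mathcal{P}\setminus\{0\}$ with $\mathcal{K}_b\varphi=r(\mathcal{K}_b)\varphi$. I apply Lemma~\ref{lem 2.3} with $u_0:=\varphi$. If $\mathcal{A}$ has a fixed point on $\mathcal{P}\cap\partial\Omega_\delta$, the theorem is already proved, so assume not. Suppose $u\in\mathcal{P}\cap\partial\Omega_\delta$ and $\mu>0$ satisfy $u-\mathcal{A}u=\mu\varphi$. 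Since $0\le u(t)\le \delta$, (C2) gives $f(t,u(t))\ge bu(t)$, hence $\mathcal{A}u\ge \mathcal{K}_b u$ pointwise, and therefore $u\ge \mathcal{K}_b u+\mu\varphi$. Iterating this and using $\mathcal{K}_b^k\varphi=r(\mathcal{K}_b)^k\varphi$ gives
\[u\ge \mathcal{K}_b^n u+\mu\Big(\textstyle\sum_{k=0}^{n-1}r(\mathcal{K}_b)^k\Big)\varphi\qquad\text{for every }n\ge 1.\]
Because $r(\mathcal{K}_b)\ge 1$, the coefficient diverges as $n\to\infty$, while $\varphi$ is nonzero at some $t_0\in[0,1]$, contradicting the bound $u(t_0)\le\|u\|=\delta$. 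Hence $i(\mathcal{A},\mathcal{P}\cap\Omega_\delta,\mathcal{P})=0$, and additivity of the fixed point index produces $u^{\star}\in \mathcal{P}\cap(\Omega_R\setminus\overline{\Omega_\delta})$ with $\mathcal{A}u^{\star}=u^{\star}$ and $\delta\le\|u^{\star}\|\le R$. The main technical hurdle in this scheme is the controlled passage to the limit in the two iterations: Gelfand's formula is what converts the qualitative estimates $r(\mathcal{K}_a)<1$ and $r(\mathcal{K}_b)\ge 1$ into the effective quantitative bounds (convergence of a Neumann series at one end, divergence of $\sum r(\mathcal{K}_b)^k$ at the other) that drive the argument, so Lemma~\ref{lem 2.8} is used in a sharp, quantitative way at both ends.
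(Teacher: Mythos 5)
Your proposal is correct and follows essentially the same route as the paper: condition (C1) together with $r(\mathcal{K}_a)\le a\tau_2<1$ from Lemma~\ref{lem 2.8} and the cone-preserving Neumann series yield the a priori bound and the index $i(\mathcal{A},\Omega_R\cap\mathcal{P},\mathcal{P})=1$ via Lemma~\ref{lem 2.2}, while (C2), $r(\mathcal{K}_b)\ge b\tau_1\ge 1$, and a Krein--Rutman eigenfunction (Lemma~\ref{lem 2.4}) give $i(\mathcal{A},\Omega_\delta\cap\mathcal{P},\mathcal{P})=0$ via Lemma~\ref{lem 2.3}, and additivity of the index concludes. The only cosmetic difference is the contradiction in the index-$0$ step: you iterate $u\ge \mathcal{K}_b u+\mu\varphi$ to obtain the divergent lower bound $\mu\bigl(\sum_{k=0}^{n-1}r(\mathcal{K}_b)^k\bigr)\varphi$, whereas the paper takes $\lambda^{\ast}=\sup\{\lambda: u_0\ge\lambda\varphi_0\}$ and contradicts its maximality --- two standard variants of the same argument.
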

\begin{proof}
Let $W=\{u\in \mathcal{P}: u=\lambda \mathcal{A}u, \lambda\in[0,1]\}$. We will prove that $W$ is a bounded set in $P$. If $u\in W$,
then from \rm{(C1)} we have
\begin{equation*}
\begin{split}
u(t)&=\lambda \mathcal{A}u(t)\\
&\leq  \int_{0}^{1} \mathcal{H}(t,s)f(s,u(s))ds \\
&\leq  \int_{0}^{1} \mathcal{H}(t,s)(au(s)+c)ds \\
&= a \int_{0}^{1} \mathcal{H}(t,s)u(s)ds+c\int_{0}^{1} \mathcal{H}(t,s)ds \\
& = (\mathcal{K}_{a}u)(t)+c \sigma(t),
\end{split}
\end{equation*}
where
\begin{equation*}
\begin{split}
\sigma(t)&=\int_{0}^{1} \mathcal{H}(t,s)ds\\
&=\frac{ t}{1-\Lambda}\int_{0}^{1}\big(\beta \textsl{g}_{A}(s)+\mu \mathcal{G}(\eta,s)\big)ds+\int_{0}^{1}\mathcal{\mathcal{G}}(t,s)ds,\ t\in[0,1].
\end{split}
\end{equation*}
So,
\[((I-\mathcal{K}_{a})u)(t)\leq c \sigma(t).\]
The condition $a\in(0,\tau_{2}^{-1})$, implies that $r(\mathcal{K}_{a})<1$, and so $(I-\mathcal{K}_{a})$ has a bounded inverse operator
$(I-\mathcal{K}_{a})^{-1}$ which is given by
\[(I-\mathcal{K}_{a})^{-1}=I+\mathcal{K}_{a}+\mathcal{K}_{a}^{2}+...\mathcal{K}_{a}^{n}+....\]
Since $\mathcal{K}_{a}(\mathcal{P})\subset\mathcal{P}$, it follows that
$(I-\mathcal{K}_{a})^{-1}(\mathcal{P})\subset\mathcal{P}$. So we have
\begin{equation}\label{eq19}
u(t)\leq ((I-\mathcal{K}_{a})^{-1}c\sigma)(t),\ \text{for}\ t\in [0,1].
\end{equation}
By \eqref{eq19}, we have $\|u\|\leq \|((I-\mathcal{K}_{a})^{-1}c\sigma)\|$, that is, $W$ is bounded.\\
By selecting $R>\max\big\{\delta,\ \sup\{\|u\|:u\in W\}\big\}$ and $\Omega_{R}=\{u\in E:\|u\|<R\}$, then we have
\[ u \neq \lambda \mathcal{A}u,\ \text{for}\ u\in\partial\Omega_{R}\cap\mathcal{P},\ \lambda\in[0,1].\]
From Lemma \ref{lem 2.2}, we have

\begin{equation}\label{eq20}
i\big(\mathcal{A},\Omega_{R}\cap\mathcal{P},\mathcal{P}\big)=1.
\end{equation}
On the other hand, by \rm{(C2)} we have $r(\mathcal{K}_{b})\geq1$, and since
 $\mathcal{K}_{b}(\mathcal{P})\subset\mathcal{P}$, then by Lemma \ref{lem 2.4}, we know that there exists
 $\varphi_{0}\in \mathcal{P}\setminus\{0\}$ such that $\mathcal{K}_{b}\varphi_{0}=r(\mathcal{K}_{b})\varphi_{0}$ and
$\varphi_{0}=r(\mathcal{K}_{b})^{-1}\mathcal{K}_{b}\varphi_{0} \in \mathcal{P}$.
Now we show that
\begin{equation}\label{eq21}
u-\mathcal{A}u\neq \lambda \varphi_{0},\ \text{for all}\ u\in\partial\Omega_{\delta}\cap\mathcal{P},\ \lambda\geq0,
\end{equation}
where \[\Omega_{\delta}=\{u\in E:\|u\|<\delta\}.\]
We may suppose that $\mathcal{A}$ has no fixed points on
 $\partial\Omega_{\delta}\cap\mathcal{P}$ (otherwise, the proof is finished). Assume by contradiction that there exist $u_{0}\in \partial\Omega_{\delta}\cap\mathcal{P}$ and $\lambda_{0}\geq0$ such that $u_{0}-\mathcal{A}u_{0}=\lambda_{0}\varphi_{0}$, then $\lambda_{0}>0$, and by $\left(C_{2}\right)$, it follows that $(\mathcal{A}u_{0})(t)\geq (\mathcal{K}_{b}u_{0})(t)$. Thus
\[u_{0}=\mathcal{A}u_{0}+\lambda_{0}\varphi_{0}\geq \mathcal{K}_{b}u_{0}+\lambda_{0}\varphi_{0}\geq\lambda_{0}\varphi_{0}.\]
Let $\lambda^{\ast}=\sup\{\lambda: u_{0}\geq \lambda\varphi_{0}\}$, then $\lambda^{\ast}>0$ and $u_{0}\geq \lambda^{\star}\varphi_{0}$.
Since $\mathcal{K}_{b}(\mathcal{P})\subset\mathcal{P}$, we have
\begin{equation*}
\begin{split}
u_{0}&\geq \mathcal{K}_{b}u_{0}+\lambda_{0}\varphi_{0}\\
&\geq \mathcal{K}_{b}\lambda^{\star}\varphi_{0}+\lambda_{0}\varphi_{0}\\
&=\lambda^{\star}r(\mathcal{K}_{b})\varphi_{0}+\lambda_{0}\varphi_{0}\\
&=(\lambda^{\star}r(\mathcal{K}_{b})+\lambda_{0})\varphi_{0},
\end{split}
\end{equation*}
which contradicts the definition of $\lambda^{\star}$. So \eqref{eq21} is true and by Lemma \ref{lem 2.3} we have
\begin{equation}\label{eq22}
i\big(\mathcal{A},\Omega_{\delta}\cap\mathcal{P},\mathcal{P}\big)=0.
\end{equation}
By \eqref{eq22}and \eqref{eq20}, we get
\[i\big(\mathcal{A},(\Omega_{R}\setminus\overline{\Omega}_{\delta})\cap\mathcal{P},\mathcal{P}\big)=
i\big(\mathcal{A},\Omega_{R}\cap\mathcal{P},\mathcal{P}\big)-i\big(\mathcal{A},\Omega_{\delta}\cap\mathcal{P},\mathcal{P}\big)=1.\]
Therefore, $\mathcal{ A}$ has at least one fixed point in $(\Omega_{R}\setminus\overline{\Omega}_{\delta})\cap\mathcal{P}$ which means that
boundary value problem \eqref{eq8} has at least one positive solution.

\end{proof}

\begin{exmp}
Consider the boundary value problem \eqref{eq8} with $\alpha=\frac{5}{2}$, $\beta=1$, $ \mu=2$, $\eta=\frac{1}{7}$, $f(t,u) =1-t+e^{\frac{t}{4}-u}$, and

\begin{equation*}
A(t)=\begin{cases} 0, & t\in [0, \frac{3}{7}); \\
2 , & t\in [\frac{3}{7}, \frac{4}{7}),\\
1 , & t\in [\frac{4}{7}, 1]. \end{cases}
\end{equation*}
Putting
\begin{equation*}                                                                                               \mathcal{G}(t,s)=\frac{1}{\Gamma(\alpha)} \begin{cases}\mathcal{G}_{1}(t,s), & 0 \leq s \leq t \leq 1, \\
\mathcal{G}_{2}(t,s) , & 0 \leq t \leq s \leq 1.
\end{cases}
\end{equation*}
Thus
\begin{equation*}
\textsl{g}_{A}(s)=\begin{cases} 2\mathcal{G}_{1}(\frac{3}{7},s)-\mathcal{G}_{1}(\frac{4}{7},s), & 0\leq s<\frac{3}{7},\\
 2\mathcal{G}_{2}(\frac{3}{7},s)-\mathcal{G}_{1}(\frac{4}{7},s), & \frac{3}{7}\leq s<\frac{4}{7},\\
2\mathcal{G}_{2}(\frac{3}{7},s)-\mathcal{G}_{2}(\frac{4}{7},s), & \frac{4}{7}\leq s\leq 1.\\
\end{cases}
\end{equation*}
Consequently
\begin{equation*}                                                                                                     \textsl{g}_{A}(s)=\frac{1}{\Gamma\big(\frac{5}{2}\big)}\begin{cases}
\frac{2}{7}(1-s)^{\frac{3}{2}}-2(\frac{3}{7}-s)^{\frac{3}{2}}+(\frac{4}{7}-s)^{\frac{3}{2}}, & 0\leq s<\frac{3}{7},\\
\frac{2}{7}(1-s)^{\frac{3}{2}}+(\frac{4}{7}-s)^{\frac{3}{2}}, & \frac{3}{7}\leq s<\frac{4}{7},\\
\frac{2}{7}(1-s)^{\frac{3}{2}}, & \frac{4}{7}\leq s\leq 1.
\end{cases}
\end{equation*}
So, $\Lambda=\mu\eta+\beta\int_{0}^{1}tdA(t)=2\frac{1}{7}+2\frac{3}{7}-\frac{4}{7}=\frac{4}{7}<1$, and $\textsl{g}_{A}(s)\geq0$. The boundary value problem \eqref{eq8} becomes the fractional five-point boundary value problem

\begin{equation}\label{eq23}
  \begin{cases}  D^{\frac{5}{2}}u(t) +f(t,u(t)) = 0,\  t \in (0,1),\\
 u(0)= u^{\prime\prime} (0) =0,\ u(1)=2 u(\frac{1}{7})+ 2u(\frac{3}{7})-u(\frac{4}{7}).
  \end{cases}
  \end{equation}

On the other hand, by calculation, we have
\begin{align*}
\tau_{1}^{-1}&=\Bigg(\frac{\eta-\eta^{\alpha-1}}{1-\Lambda}\int_{0}^{1}(s-s^{\alpha-1})\bigg(\beta \textsl{g}_{A}(s)+(\mu-\Lambda+1)\Psi(s)\bigg)ds\Bigg)^{-1} \approx 57.3423,\\
\tau_{2}^{-1}&=\Bigg(\frac{1}{1-\Lambda}\bigg(\beta\int_{0}^{1}
\textsl{g}_{A}(s)ds+\frac{\mu-\Lambda+1}{\Gamma(\alpha+1)}\bigg)\Bigg)^{-1} \approx 0.523515.
\end{align*}

Next, let us choose $a=\frac{2}{5}$, $c=3$, $b=58$, and $\delta=\frac{3}{200}$. Using the Mathematica software, we easily check that
\[f(t,u)\leq 1+e^{\frac{1}{4}-u}\leq\frac{2}{5}u+3,\ \text{for all}\ (t,u)\in[0,1]\times[0,\infty),\]
\[f(t,u)\geq e^{\frac{t}{4}-u}\geq e^{-u}\geq 58u,\ \text{for all}\ (t,u)\in[0,1]\times\Big[0,\frac{3}{200}\Big],\]

So $f(t,u)$ satisfies the conditions \rm{(H1)} and \rm{(H2)}. Consequently, by Theorem \ref{th1}, the problem \eqref{eq23} has at least one positive solution.
\end{exmp}

\end{document}